 \theoremstyle{plain}
 \newtheorem{theorem}{Theorem}[section]  
 \newtheorem{lemma}[theorem]{Lemma}  
 \newtheorem*{theorem*}{Theorem}
 \theoremstyle{plain}
 \newtheorem*{example*}{Example}
 \newtheoremstyle{citing}
   {3pt}
   {3pt}
   {\itshape}
   {}
   {\bfseries}
   {.}
   {.5em}
   {\thmnote{#3}}
 \theoremstyle{citing}
\numberwithin{equation}{section}
\newlength{\intwidth}
\DeclareRobustCommand{\cpvint}[2]
    {\mathop{%
       \text{%
         \settowidth{\intwidth}{%
           \ifx\ilimits@\displaylimits
             $\int_{#1}^{#2}$%
           \else
             $\int$%
           \fi}%
         \makebox[0pt][l]{\makebox[\intwidth]{$\text{C}$}}%
         $\int_{#1}^{#2}$}}}
\DeclareRobustCommand{\cpvintsmall}[2]
    {\mathop{%
       \text{%
         \settowidth{\intwidth}{%
           \ifx\ilimits@\displaylimits
             $\int_{#1}^{#2}$%
           \else
             $\int$%
           \fi}%
         \makebox[0pt][l]{\makebox[\intwidth]{$\text{{\tiny C}}$}}%
         $\int_{#1}^{#2}$}}}
\newcommand{\dist}{\text{dist}}
\newcommand{\rand}{\partial} 
\newcommand{\where}{:\:}
\newcommand{\laplace}{\Delta}
\newcommand{\nz}{{\mathbb N}}
\newcommand{\rz}{{\mathbb R}}  
\newcommand{\zz}{{\mathbb Z}}  
\newcommand{\eps}{\varepsilon}  
\renewcommand{\phi}{\varphi} 
\newcommand{\eval}{\vert}
\begin{document}
 
\title[Embedded constant curvature curves] {Embedded constant curvature curves on convex surfaces}

\author{Harold Rosenberg}
\address{Instituto de Matematica Pura y Aplicada\\
         110 Estrada Dona Castorina\\
         Rio de Janeiro 22460-320, Brazil
         }
\email{rosen@impa.br}

\author{Matthias Schneider}
\address{Ruprecht-Karls-Universit\"at\\
         Im Neuenheimer Feld 288\\
         69120 Heidelberg, Germany}
\email{mschneid@mathi.uni-heidelberg.de}

\date{March 16, 2011}  
\keywords{prescribed geodesic curvature}
\subjclass[2000]{53C42, 37J45, 58E10}

\begin{abstract}
We prove the existence of embedded closed constant curvature curves
on convex surfaces.
\end{abstract}

\maketitle

\section{Introduction}
\label{sec:introduction}
Let $(S^2,g)$ be a two dimensional oriented sphere with a smooth Riemannian metric $g$.
We prove existence results for closed embedded curves with prescribed 
geodesic curvature in $(S^2,g)$, when the Gauss curvature $K_g$ of the metric
$g$ is positive.
In particular, we study the existence
of closed embedded constant curvature curves on strictly convex spheres.\\
Let $c:S^2 \to \rz$ be a smooth positive function.
We consider the following equation
for curves $\gamma$ on $S^2$:
\begin{align}
\label{eq:prescribed_geodesic}
D_{t,g} \dot \gamma(t) = |\dot \gamma(t)|_{g} c(\gamma(t)) J_{g}(\gamma(t)) \dot \gamma(t),
\end{align}
where $D_{t,g}$ is the covariant derivative with respect to $g$,
and $J_g(x)$ is the rotation by $\pi/2$ in $T_xS^2$ with respect to $g$ 
and the given orientation.
Solutions $\gamma$ to equation \eqref{eq:prescribed_geodesic} 
are constant speed curves with geodesic curvature $c_g(\gamma,t)$ given by 
$c(\gamma(t))$. We remark that, besides the geometric
interpretation, \eqref{eq:prescribed_geodesic} describes the motion
of a charged particle on $(S^2,g)$ in a magnetic field with magnetic form
$c \mu_g$, where $\mu_g$ denotes the volume form of $g$ 
(see \cite{MR890489,MR676612,MR1432462}).\\
From \cite{arXiv:0808.4038,MR1185286}
closed embedded solutions to \eqref{eq:prescribed_geodesic} exist, if the 
curvature function $c$ is large enough depending on the metric $g$.
If $g$ is 
$\frac14$-pinched, i.e. $\sup K_g < 4\inf K_g$,
then there are embedded closed solutions of \eqref{eq:prescribed_geodesic}
for every positive function $c$ (see \cite{arXiv:0808.4038,2010arXiv1010.1879R}).
It is conjectured in \cite[\S5]{MR676612}) and \cite{2010arXiv1010.1879R} 
that this remains
true for an arbitrary metric $g$ on $S^2$.
We note that, if $K_g$ and $c$ are positive, then 
from \cite{2010arXiv1010.1879R,robeday,arXiv:0903.1128} 
there are always Alexandrov embedded, closed solutions 
to \eqref{eq:prescribed_geodesic}, i.e. curves that bound an immersed disc.\\
We shall show that on strictly convex spheres, i.e. $K_g>0$, there are closed embedded solutions
to \eqref{eq:prescribed_geodesic}, if the curvature function is small enough depending
on the metric $g$. In particular, we show  
\begin{theorem}
Suppose $(S^2,g)$ has positive Gauss curvature.
Then there is $\eps_0>0$ such that for all $0< c \le \eps_0$
there are two embedded closed curves with constant geodesic curvature $c$.   
\end{theorem}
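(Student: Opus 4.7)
The plan is to construct the two embedded curves by perturbing a single simple closed geodesic $\gamma_0$ of $(S^2,g)$, with the two solutions arising from the two connected components of $S^2\setminus\gamma_0$. Since $K_g>0$, the theorem of Calabi--Cao guarantees that the shortest closed geodesic on $(S^2,g)$ is embedded; alternatively one could invoke Lusternik--Schnirelmann to produce (at least three) simple closed geodesics. Fix one such $\gamma_0$, which solves \eqref{eq:prescribed_geodesic} at $c=0$.

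I would set up the problem as a nonlinear Fredholm equation: on a suitable Banach manifold of $C^{2,\alpha}$ regular loops modulo reparametrization, define
\[
F(\gamma,c):=D_{t,g}\dot\gamma-|\dot\gamma|_g\,c\,J_g(\gamma)\dot\gamma,
\]
so that solutions at constant geodesic curvature $c$ are zeros of $F(\cdot,c)$ and $F(\gamma_0,0)=0$. The linearization at $(\gamma_0,0)$ in the normal direction reduces to the scalar Jacobi operator $u\mapsto u''+K_g(\gamma_0)u$ on $S^1$. When this operator is invertible I would apply the implicit function theorem to obtain a smooth branch of solutions $\gamma_c$ for small $c$. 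Otherwise (as e.g.\ on the round sphere) the kernel is finite-dimensional and a Lyapunov--Schmidt reduction projects the equation onto the kernel; the resulting reduced bifurcation equation is to be analysed on a compact manifold closely related to the moduli of nearby geodesics, and a Morse/degree argument (using $\chi(S^2)=2$) should then provide at least two solutions for each small $c>0$.

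The geometric reason for the count of two is that equation \eqref{eq:prescribed_geodesic} is invariant under the time reversal $t\mapsto-t$, but this reversal changes the sign of $J_g\dot\gamma$ at each point. Thus perturbing $\gamma_0$ from one parametrization bends it into one component of $S^2\setminus\gamma_0$, while perturbing from the reversed parametrization bends it into the other; the two perturbed curves $\gamma_c^+$ and $\gamma_c^-$ are therefore geometrically distinct. Since $\gamma_0$ is embedded and $\gamma_c^\pm\to\gamma_0$ in $C^1$ as $c\to 0$, embeddedness of $\gamma_c^\pm$ for $c$ sufficiently small is automatic.

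The main obstacle is the potentially degenerate case in which the Jacobi operator along $\gamma_0$ has a nontrivial periodic kernel; there the Lyapunov--Schmidt reduction is unavoidable, and one must ensure solvability of the reduced equation for every small $c>0$ rather than just generically. A cleaner alternative would be to invoke the existence of Alexandrov-embedded solutions for every $c>0$ furnished by \cite{2010arXiv1010.1879R,robeday,arXiv:0903.1128}, establish $C^1$-compactness as $c\to 0$, pass to a simple closed geodesic in the limit, and conclude embeddedness via $C^1$-proximity; the count of two would then follow from the degree arguments already available in those references together with the two-sidedness observation above.
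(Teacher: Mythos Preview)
Your approach is genuinely different from the paper's. The paper never fixes a geodesic to perturb; it works globally with the $S^1$-equivariant degree $\chi_{S^1}(X_{c,g},M)$ on the open set $M$ of embedded regular loops. After uniformizing $g=e^{\phi}g_{can}$, it joins $g$ to the round metric through the family $g_t=e^{t\phi}g_{can}$, checks that $K_{g_t}>0$ for all $t$, and proves (Lemmas~\ref{lem:length_bound}--\ref{l:proper}) that for $0<c\le\eps_0$ the zero sets of $X_{sc,g_t}$ in $M$ are compact uniformly in $(t,s)$. The two ingredients are a length bound coming from Reilly's formula and the observation that a $C^2$-limit of simple closed curves which is a geodesic must itself be simple. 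Homotopy invariance then transports the value $\chi_{S^1}(X_{c,g_{can}},M)=-2$ from the round sphere to $g$, and since each isolated zero orbit contributes at least $-1$ to the degree, there must be at least two orbits. Nondegeneracy of any particular geodesic never enters.

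Your outline, by contrast, stands or falls on exactly that nondegeneracy, and the obstacle you flag is a genuine gap rather than a technicality. When the Jacobi operator $\partial_s^2+K_g\circ\gamma_0$ has nontrivial periodic kernel, the Lyapunov--Schmidt reduction leaves you with a finite-dimensional equation whose solvability you have not controlled: the appeal to ``$\chi(S^2)=2$'' is unjustified, because neither the kernel nor the local moduli of geodesics near $\gamma_0$ need be a $2$-sphere, and nothing in your sketch forces the reduced equation to have a zero for \emph{every} small $c>0$ rather than only generically. Your orientation-reversal mechanism for obtaining two distinct embedded curves from one is correct and pleasant---the reversed curve has curvature $-c$, so the two perturbed curves cannot share an image---but it only doubles a count that you have not shown to be at least one in the degenerate situation. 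The alternative you sketch at the end (take Alexandrov-embedded solutions, prove $C^1$-compactness as $c\to 0$, upgrade to embeddedness by $C^1$-proximity to a simple geodesic limit) is in spirit the paper's own route: the required compactness is exactly Lemma~\ref{l:proper}, and the multiplicity ultimately comes from the global degree $-2$ on the round sphere, not from the two-sidedness of a single geodesic.
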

Hence, on strictly convex spheres there are closed, embedded constant curvature
curves for large and small values of $c>0$. We expect that this is true
for all $c>0$.\\
We use the degree theory developed in \cite{arXiv:0808.4038} to 
prove our existence result. The required 
compactness results are given in section \ref{sec:apriori-estimate}.
The a priori estimates follow from Reilly's formula. 
The fact that a geodesic cannot touch itself continues to hold 
for solutions to \eqref{eq:prescribed_geodesic} when the geodesic 
curvature is close to zero. This allows to carry out the
degree argument within the class of embedded curves.
The existence result is given in section \ref{sec:existence}.

\section{The apriori estimate}
\label{sec:apriori-estimate}
\begin{lemma}
\label{lem:length_bound}
Suppose $(S^2,g)$ has positive Gauss curvature $K_g$ and
$\gamma \in C^2(S^1,S^2)$ is an (Alexandrov) embedded curve
with nonnegative geodesic curvature. Then the length $L(\gamma)$ of $\gamma$ 
is bounded by
\begin{align*}
L(\gamma) \le 2\pi \sqrt{2} \big(\inf_{S^2} K_g\big)^{-\frac12}.  
\end{align*}
\end{lemma}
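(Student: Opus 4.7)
\emph{Proof plan.} The plan is to apply Reilly's formula on the (immersed) disc $D$ bounded by $\gamma$, using as test function the harmonic extension of a trigonometric wave whose frequency is tuned so that the length $L = L(\gamma)$ enters the identity through the circle eigenvalue $(2\pi/L)^2$.

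First I would use the Alexandrov embeddedness to pull the metric back to an abstract disc $D$ with $\partial D = \gamma$; the Gauss curvature of $D$ is still bounded below by $k_0 := \inf_{S^2} K_g > 0$, and the boundary geodesic curvature is still nonnegative. Parameterize $\gamma$ by arclength, so $s \in \R/L\Z$, and set
\[
z(s) := \cos(2\pi s/L),
\]
which is nonconstant and satisfies $z_{ss} = -(2\pi/L)^2 z$. Let $u : D \to \R$ be its harmonic extension: $\Delta u = 0$ in $D$ and $u = z$ on $\gamma$. Because $z$ is nonconstant, so is $u$, and Green's identity gives the strictly positive quantity
\[
\int_\gamma u_\nu z \, ds \;=\; \int_D |\nabla u|^2 \, dA \;>\; 0.
\]

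The core of the argument is Reilly's formula in two dimensions,
\[
\int_D \bigl((\Delta u)^2 - |\nabla^2 u|^2 - K_g |\nabla u|^2\bigr) dA \;=\; \int_\gamma \bigl(2 u_\nu z_{ss} + k_g u_\nu^2 + k_g z_s^2\bigr) ds.
\]
Substituting $\Delta u = 0$, $z_{ss} = -(2\pi/L)^2 z$, and the Green identity above, the whole expression rearranges into
\[
\int_D |\nabla^2 u|^2 \, dA + \int_D K_g |\nabla u|^2 \, dA + \int_\gamma k_g(u_\nu^2 + z_s^2) \, ds \;=\; 2\Bigl(\tfrac{2\pi}{L}\Bigr)^{\!2}\! \int_D |\nabla u|^2 \, dA.
\]
Every term on the left is nonnegative thanks to $K_g \ge k_0$ and $k_g \ge 0$, so dropping the Hessian and boundary contributions and dividing through by the positive number $\int_D |\nabla u|^2$ yields $k_0 \le 8\pi^2/L^2$, which is equivalent to the claimed bound $L \le 2\pi\sqrt{2}\,(\inf K_g)^{-1/2}$.

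I expect the only genuinely nontrivial step to be the choice of test function. The ansatz $z = \cos(2\pi s/L)$ is essentially forced: the length must enter through the first nontrivial Laplace eigenvalue of the circle $\R/L\Z$, so the lowest eigenfunction is the natural candidate, and the mechanism by which $L$ enters quantitatively is the boundary identity $-2\int_\gamma u_\nu z_{ss}\, ds = 2(2\pi/L)^2 \int_D |\nabla u|^2$. Once $z$ is fixed, Reilly's formula, Green's identity, and the two positivity hypotheses combine transparently to produce the bound in one line.
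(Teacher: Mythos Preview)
Your proof is correct and is essentially the same as the paper's: both apply Reilly's formula on the (possibly immersed) disc bounded by $\gamma$ to the harmonic extension of a first eigenfunction on $\partial D$, then use $K_g\ge k_0$ and $k_g\ge 0$ to deduce $k_0\le 2\lambda_1 = 8\pi^2/L^2$. The only cosmetic difference is that the paper speaks abstractly of ``an eigenfunction of $\lambda_1$'' while you write it explicitly as $\cos(2\pi s/L)$, and you have already folded the integration by parts $\int_\gamma z_s u_{\nu,s}\,ds = -\int_\gamma z_{ss}u_\nu\,ds$ into your stated form of the boundary term.
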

\begin{proof}
We use Reilly's formula \cite{MR0474149}: Let $(M,g)$ be a compact Riemannian manifold
with boundary $\rand M$, $f\in C^\infty(M)$, $z=f\eval_{\rand M}$ and 
$u=\frac{\rand f}{\rand n}$ on $\rand M$, where $n$ denotes the outer normal.
Then
\begin{align}
\label{eq:reillys}
\int_M (\bar{\laplace} f)^2 - |\bar{\nabla}^2 f|^2  =
&\int_M {\rm Ric}(\bar{\nabla} f, \bar{\nabla} f) \notag\\
&+
\int_{\rand M} (\laplace z + H u)u -\langle \nabla z,\nabla u\rangle
+ \Pi(\nabla z,\nabla z),   
\end{align}
where we denote by $\bar{\laplace}$, $\laplace$ and $\bar{\nabla}$, $\nabla$
the Laplacians and covariant derivatives on $M$ and $\rand M$ respectively;
$H$ is the mean curvature and $\Pi$ is the second fundamental form of $\rand M$.\\
If the curve $\gamma $ is embedded or Alexandrov embedded, then we may assume
that we are in the above situation with $\rand M=\gamma$.\\
We take $z$ an eigenfunction of $\lambda_1$ the first nontrivial eigenvalue
on $\rand M$
\begin{align*}
\laplace z + \lambda_1 z = 0 \text{ on } \rand M,   
\end{align*}
and $f$ its harmonic extension to $M$. In dimension two, \eqref{eq:reillys} 
leads to
\begin{align*}
\int_M (\bar{\laplace} f)^2 - |\bar{\nabla}^2 f|^2 =  
&\int_M K_g |\bar{\nabla} f|^2
+
\int_{\rand M} \laplace z u + c u^2 -\langle \nabla z,\nabla u\rangle
+ c |\nabla z|^2, 
\end{align*}
where $c$ is the geodesic curvature of $\rand M$ and $K_g$ denotes 
the Gauss curvature of $M$.
Using the fact that the geodesic curvature $c$ of $\rand M$ is nonnegative,
$f$ is harmonic, and $z$ is an eigenfunction, we obtain
\begin{align*}
0\ge  \big(\inf_{M} K_g\big) \int_M  |\bar{\nabla} f|^2 -2\lambda_1 \int_{\rand M} z u.   
\end{align*}
Integrating by parts again we see
\begin{align*}
\int_{\rand M} z u = \int_M  |\bar{\nabla} f|^2 + f\bar{\laplace} f. 
= \int_M  |\bar{\nabla} f|^2.   
\end{align*}
Since $z$ is a nontrivial eigenfunction, $f$ is non constant such that
we arrive at
\begin{align*}
\big(\inf_{M} K_g\big) \le 2 \lambda_1.  
\end{align*}
The first nontrivial eigenvalue $\lambda_1$ depends only on the length $L(\rand \Omega)$
of $\rand M$ and is given by
\begin{align*}
\lambda_1 = \frac{4\pi^2}{L(\rand \Omega)^2}.  
\end{align*}
This gives the claim.
\end{proof}

\begin{lemma}
\label{lem_geodesic_simple}
Let $(\gamma_n)$ be a sequence of simple closed curves 
converging in $C^{2}(S^{1},S^{2})$ to a non constant closed geodesic $\gamma$
in $(S^{2},g)$. Then $\gamma$ is simple as well.
\end{lemma}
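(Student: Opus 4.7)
The plan is to suppose $\gamma$ is not simple and obtain a contradiction by analysing the tangent vectors at a self-intersection. So pick $t_1\neq t_2$ in $S^1$ with $\gamma(t_1)=\gamma(t_2)=:p$. Because $\gamma$ is a non-constant closed geodesic it has constant speed $v>0$, and $\dot\gamma(t_1),\dot\gamma(t_2)\in T_pS^2$ both have length $v$; there are three sub-cases depending on whether these vectors are linearly independent, anti-parallel, or parallel.

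If they are linearly independent, $\gamma$ has a transverse self-intersection at $p$. Transverse intersections are stable under $C^2$-perturbations, so applying the implicit function theorem to the map $(s,t)\mapsto\gamma_n(s)-\gamma_n(t)$ in a local chart at $p$ yields a self-intersection of $\gamma_n$ for all large $n$, contradicting simplicity. If $\dot\gamma(t_2)=-\dot\gamma(t_1)$, the two geodesics $s\mapsto\gamma(t_1+s)$ and $s\mapsto\gamma(t_2-s)$ share their $1$-jet at $s=0$, so they coincide by ODE uniqueness; differentiating the identity $\gamma(t_1+s)=\gamma(t_2-s)$ at $s=(t_2-t_1)/2$ forces $\dot\gamma=0$ at that point, contradicting $|\dot\gamma|\equiv v$.

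The remaining, parallel case $\dot\gamma(t_2)=\dot\gamma(t_1)$ is the main obstacle, because the self-intersection is tangential to first order and so is not forced on the nearby simple $\gamma_n$ by any local argument. By ODE uniqueness $t_2-t_1$ is a period of $\gamma$; let $T^{*}<T$ be the minimal positive period. Then $\gamma$ factors as $\gamma=\gamma^{*}\circ\varphi_k$, where $\gamma^{*}$ is a primitive closed geodesic of period $T^{*}$ and $\varphi_k\colon S^1\to S^1$ is the $k$-fold covering with $k=T/T^{*}\ge 2$. Any self-intersection of $\gamma^{*}$ would produce one of $\gamma$ of the same type, excluded by the first case if transverse, by the second case if anti-parallel, and by minimality of $T^{*}$ if parallel; hence $\gamma^{*}$ is simple. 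I would then fix a tubular neighbourhood $U\cong S^1\times(-\eps,\eps)$ of $\gamma^{*}(S^1)$ in $(S^2,g)$ with core projection $\pi\colon U\to S^1$. For large $n$, $C^0$-closeness gives $\gamma_n(S^1)\subset U$ and $\deg(\pi\circ\gamma_n)=\deg(\pi\circ\gamma)=k$; on the other hand $\gamma_n(S^1)$ is a simple closed curve in the open annulus $U$, and by Jordan--Schoenflies any such curve has winding number $0$ or $\pm 1$ around the core. Thus $\deg(\pi\circ\gamma_n)\in\{0,\pm 1\}$, contradicting $k\ge 2$ and completing the proof.
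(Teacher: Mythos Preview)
Your proof is correct and follows essentially the same strategy as the paper's: both rule out a transverse self-intersection by stability under $C^2$-perturbation, eliminate the anti-parallel tangential case by differentiating the reflection identity at the midpoint to force $\dot\gamma=0$, and handle the parallel case by recognising $\gamma$ as a $k$-fold cover ($k\ge 2$) of a simple geodesic and invoking the stability of the winding number. Your treatment is simply more explicit where the paper is terse, in particular in verifying that the underlying primitive geodesic $\gamma^*$ is simple and in spelling out the tubular-neighbourhood degree argument.
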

\begin{proof}
To obtain a contradiction assume that there are $\theta_1\neq \theta_2$ in 
$S^1=\rz/\zz$
such that $\gamma(\theta_1)=\gamma(\theta_2)$. Since $\gamma$ is a limit of
simple curves and $|\dot\gamma|\equiv const$, there holds 
\begin{align*}
\dot\gamma(\theta_1) = \pm \dot\gamma(\theta_2).   
\end{align*}
From the uniqueness of geodesics we have for $t\in S^1$
\begin{align*}
\gamma(t)=\gamma(\pm(t-\theta_1)+\theta_2).  
\end{align*}
Setting $t=(\theta_1+\theta_2)/2)$ we find that
\begin{align*}
\gamma(t)=\gamma(t-\theta_1+\theta_2).    
\end{align*}
Consequently, $\gamma$ is a $n$-fold covering of a simple geodesic for
some $n\ge 2$. From the stability of the winding number, we get a
contradiction. 
\end{proof}
We denote by $g_{can}$ the 
standard round metric on $S^2$ with curvature $K_{g_{can}}\equiv 1$.
We fix a function $\phi \in C^\infty(S^2,\rz)$ and a conformal metric 
\begin{align*}
g= e^\phi g_{can}. 
\end{align*}
on $S^2$ with positive Gauss curvature $K_g>0$.
We consider the family of metrics $\{g_t\where t\in [0,1]\}$ 
defined by
\begin{align*}
g_t &:= e^{t\phi} g_{can}.
\end{align*}
Then the Gauss curvature $K_{g_t}$ of the metric $g_t$ satisfies
for some $K_0>0$
\begin{align*}
K_{g_t} &= e^{-t\phi}\big(-t\laplace_{g_{can}}(\phi)+2\big)\\
&= e^{-t\phi}\big(-t(2-K_{g}e^\phi)+2\big)\ge K_0,
\end{align*}
because $K_g$ is positive.

\begin{lemma}
\label{l:proper}
Suppose $c:S^{2}\to \rz$ is a nonnegative smooth
function. For $r\in [0,1]$ we define the set of closed curves $\mathcal{M}_r$ by    
\begin{align*}
\mathcal{M}_{r} := \{\gamma \in &C^{2}(S^{1},S^{2})\where \gamma \text{ is embedded, }
|\dot\gamma|_g \equiv {\rm const},\\
&\exists (t,s) \in [0,1] \times [0,r]:\: 
c_{g_t}(\gamma,\theta) = s c(\gamma(\theta)) \; \forall \theta \in S^{1}.\},  
\end{align*}
where $c_{g_t}(\gamma,\cdot)$ denotes the geodesic curvature of $\gamma$ with respect
to $g_t$.\\
Then there is $\eps_0>0$, such that $\mathcal{M}_{\eps_0}$ is compact.
Moreover, $\eps_0>0$ may be chosen uniformly with respect to $\|c\|_\infty$.
\end{lemma}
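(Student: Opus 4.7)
The plan is to argue by sequential compactness: derive uniform $C^2$ bounds on $\mathcal{M}_{\eps_0}$, extract a $C^2$-convergent subsequence, and check the limit curve still belongs to $\mathcal{M}_{\eps_0}$. The uniform bounds come almost for free from what is already set up. Given $\gamma\in\mathcal{M}_{\eps_0}$ with parameters $(t,s)\in[0,1]\times[0,\eps_0]$, Lemma~\ref{lem:length_bound} applied on $(S^2,g_t)$ -- whose Gauss curvature is bounded below by $K_0>0$ by the computation preceding the statement -- yields $L_{g_t}(\gamma)\le 2\pi\sqrt{2/K_0}$. Since $|\dot\gamma|_{g_t}$ is constant and equal to this length, speeds are uniformly bounded, and equation~\eqref{eq:prescribed_geodesic} then controls $|D_{t,g_t}\dot\gamma|$ by $\eps_0\|c\|_\infty |\dot\gamma|^2$, giving a uniform $C^2$ bound. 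All estimates depend only on $\eps_0\|c\|_\infty$ and the metric $g$, which gives the uniformity in $\|c\|_\infty$ promised in the statement. Arzel\`a--Ascoli produces a subsequence with $(t_n,s_n)\to(t_0,s_0)$ and $\gamma_n\to\gamma$ in $C^2$, and $\gamma$ solves the limit equation at constant speed.

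Next I would rule out $L(\gamma)=0$. If the $\gamma_n$ collapsed to a point, then one of the two discs $D_n$ bounded by $\gamma_n$ on $S^2$ would have area going to zero, and Gauss--Bonnet on that disc would give
\[
2\pi \;=\; \int_{D_n} K_{g_{t_n}}\,d\mu_{g_{t_n}} \;\pm\; s_n\int_{\gamma_n} c(\gamma_n)\,ds,
\]
whose first term is bounded by $\|K\|_\infty |D_n|\to 0$ and whose second term is bounded by $\eps_0\|c\|_\infty L(\gamma_n)\to 0$, a contradiction. Hence $\gamma$ is non-constant.

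The main obstacle is to show the limit $\gamma$ is still embedded. Assuming $\gamma(\theta_1)=\gamma(\theta_2)$ with $\theta_1\neq\theta_2$, the constancy of $|\dot\gamma|$ and the $C^1$-convergence from simple curves force $\dot\gamma(\theta_1)=\pm\dot\gamma(\theta_2)$. In the $+$ case, ODE uniqueness for~\eqref{eq:prescribed_geodesic} makes $\gamma$ a $k$-fold covering of a simple closed curve for some $k\ge 2$, contradicting the stability of the preimage count at a regular value of $\gamma$ under the $C^2$ convergence of the embedded $\gamma_n$ (exactly as in the winding-number step of Lemma~\ref{lem_geodesic_simple}). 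In the $-$ case with $s_0=0$, the limit is a non-constant closed geodesic and Lemma~\ref{lem_geodesic_simple} applies verbatim.

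The genuinely delicate case is an anti-tangent contact with $s_0>0$: reversing the parameterization flips the sign of the prescribed geodesic curvature, so the geodesic uniqueness trick of Lemma~\ref{lem_geodesic_simple} breaks down. My plan here is to exploit convexity. For $\eps_0$ sufficiently small (depending only on $K_0$ and $\|c\|_\infty$), each $\gamma_n\in\mathcal{M}_{\eps_0}$ should bound, on the $J_{g_{t_n}}\dot\gamma_n$-side, a geodesically convex topological disc $D_n$ in the positively curved $(S^2,g_{t_n})$. A local computation at the putative contact point $p$ shows that the two branches of the limit curve sit on opposite sides of their common tangent line (one parabola opening toward $J\dot\gamma(\theta_1)$, the other toward $-J\dot\gamma(\theta_1)$), so the Hausdorff limit of the $D_n$ must pinch at $p$ into two lobes -- a set that fails to be geodesically convex, contradicting preservation of geodesic convexity under Hausdorff limits. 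The rigorous verification that a small bound on $|c_g|$ really does force the enclosed disc to be geodesically convex on $(S^2,g_t)$ is where $\eps_0$ enters the statement, and is the step I would spend the most care on.
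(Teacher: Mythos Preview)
Your uniform bounds, the Gauss--Bonnet non-collapse argument, and the handling of tangent self-contact ($\dot\gamma(\theta_1)=+\dot\gamma(\theta_2)$) via ODE uniqueness and winding number are all fine and parallel the paper. The divergence is in the anti-tangent case with $s_0>0$, and there your geodesic-convexity plan has a real gap. The assertion that for small $\eps_0$ each $\gamma\in\mathcal{M}_{\eps_0}$ bounds a geodesically convex disc on its $J_{g_t}\dot\gamma$-side is unproven and far from automatic on a general positively curved $(S^2,g_t)$: as $s\to 0$ such curves approach simple closed geodesics, and there is no reason the disc bounded by an arbitrary simple closed geodesic on a convex sphere should be geodesically convex. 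You flag this as the step needing most care, but without it the pinching/Hausdorff-limit argument has nothing to contradict, and the entire $s_0>0$ branch remains open.

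The paper sidesteps this case by reorganizing the contradiction one level up. Rather than fixing a candidate $\eps_0$ and proving that every $C^2$-limit of curves in $\mathcal{M}_{\eps_0}$ is embedded, it argues: suppose \emph{no} $\eps_0$ works, and pick $r_n\to 0$ with each $\mathcal{M}_{r_n}$ non-compact. For each $n$, the same $C^2$-compactness you establish produces a curve $\gamma_n\in\mathcal{M}_{r_n}$ that nearly self-intersects at parameters $\theta_{1,n}\neq\theta_{2,n}$ separated by a uniform $\delta>0$ (uniform because the $C^2$ bounds do not depend on $r$). Passing to a further subsequence, $(t_n,s_n,\gamma_n,\theta_{1,n},\theta_{2,n})\to(t,0,\gamma,\theta_1,\theta_2)$ with $s_n\le r_n\to 0$, so the limit $\gamma$ is a non-constant closed \emph{geodesic} in $(S^2,g_t)$ satisfying $\gamma(\theta_1)=\gamma(\theta_2)$ and $\theta_1\neq\theta_2$, contradicting Lemma~\ref{lem_geodesic_simple}. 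The diagonal trick forces $s_0=0$ in the final limit, so the case you find delicate simply never arises; that is the missing idea in your proposal.
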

\begin{proof}
Let $(\gamma_n)_{n \in \nz}$ be a sequence in $\mathcal{M}_r$ for some $r>0$.
By Lemma \ref{lem:length_bound} and \eqref{eq:prescribed_geodesic}
we get a uniform bound in $C^{3}(S^{1},S^{2})$
and from the Gauss-Bonnet formula the length of $\gamma_n$ 
is bounded below, in both cases the bounds are uniform with respect to $r$.
Since the metrics $\{g_t\where t \in [0,1]\}$ are uniformly equivalent,
there is $C_0>0$ such that we have for all $t\in [0,1]$
\begin{align}
\label{eq:2}
|\dot\gamma_n|_{g_t}>(C_0)^{-1} \text{ and } \|\gamma_n\|_{C^{3}(S^{1},S^{2}),g_t} < C_0. 
\end{align}
Up to a subsequence we may assume $(t_n,s_n) \to (t,s) \in [0,1]\times [0,r]$,
\begin{align*}
\gamma_n \to \gamma \text{ in } C^{2}(S^{1},S^{2}),  
\end{align*}
where $|\dot\gamma|_{g_t}\equiv {\rm const}$ and
\begin{align}
\label{eq:1}
c_{g_t}(\gamma,\theta)= s c(\gamma(\theta))\; \forall \theta \in S^{1}.  
\end{align}
Consequently, if $\mathcal{M}_r$ is not compact, there is $(t,s)\in [0,1]\times [0,r]$ and 
$\gamma_r \in C^{2}(S^{1},S^{2})$
satisfying $|\dot\gamma|_{g_t}\equiv {\rm const}$ and (\ref{eq:1}),
which is not embedded, but a limit of embedded curves in $\mathcal{M}_r$.
Thus there are $\theta_1,\theta_2 \in S^{1}$, 
such that $\theta_1\neq \theta_2$ and $\gamma_r(\theta_1)= \gamma_r(\theta_2)$.
From (\ref{eq:2}) we deduce that there is $\delta>0$ independent of $r$, such that
\begin{align}
\label{eq:3}
\delta \le |\theta_1-\theta_2|\le  1-\delta.  
\end{align}
Hence for any $n \in \nz$ there is $\gamma_n \in \mathcal{M}_r$ such that
\begin{align}
\label{eq:4}
\dist(\gamma_n(\theta_1),\gamma_n(\theta_2)) \le \frac{1}{n}.  
\end{align}
To obtain a contradiction assume there is $(r_n)$ converging to $0$ 
such that $\mathcal{M}_{r_n}$ is not compact. 
Then for any $n \in \nz$ there are $(t_n,s_n) \in [0,1]\times [0,r_n]$, 
$\theta_{1,n},\theta_{2,n} \in S^{1}$, and 
$\gamma_n \in \mathcal{M}_{r_n}$
that satisfy (\ref{eq:3}) and (\ref{eq:4}). 
From the uniform bounds, going to a subsequence, we may assume
that $(t_n,s_n,\gamma_n,\theta_{1,n},\theta_{2,n})$ converge to 
$(t,0,\gamma,\theta_1,\theta_2)$, where
$\theta_1$ and $\theta_2$ satisfy (\ref{eq:3}) and $\gamma$ is a closed nontrivial geodesic 
in $(S^2,g_t)$ satisfying 
$\gamma(\theta_1)=\gamma(\theta_2)$. This contradicts Lemma \ref{lem_geodesic_simple}.
Since all the above bounds are uniform with respect to $\|c\|_\infty$,
the constant $\eps_0>0$ may be chosen uniform with respect to $\|c\|_\infty$
as well.  
\end{proof}

\section{Existence results}
\label{sec:existence}
We follow \cite{arXiv:0808.4038} and consider solutions to \eqref{eq:1} as zeros of 
the vector field $X_{c,g}$ defined on the Sobolev space $H^{2,2}(S^1,S^2)$ as follows:
For $\gamma \in H^{2,2}(S^1,S^2)$ we let $X_{c,g}(\gamma)$ be the unique weak solution of   
\begin{align}
\label{eq:def_vector_field}
\big(-D_{t,g}^{2} + 1\big)X_{c,g}(\gamma)= 
-D_{t,g} \dot\gamma + |\dot \gamma|_{g} c(\gamma)J_{g}(\gamma)\dot\gamma   
\end{align}
in $T_\gamma H^{2,2}(S^1,S^2)$.\\
Solutions to \eqref{eq:1} or equivalently zeros of $X_{c,g}$  
are invariant under a circle action:
For $\theta \in S^{1}=\rz/\zz$ and $\gamma \in H^{2,2}(S^1,S^2)$
we define $\theta*\gamma \in H^{2,2}(S^1,S^2)$ by
\begin{align*}
\theta*\gamma(t) = \gamma(t+\theta).  
\end{align*}
Thus, any solution gives rise to a $S^{1}$-orbit of solutions and we say
that two solutions $\gamma_1$ and $\gamma_2$ are (geometrically) distinct, if
$S^{1}*\gamma_1 \neq S^{1}*\gamma_2$.\\
We denote by $M\subset H^{2,2}(S^1,S^2)$ the set
\begin{align*}
M:= \{\gamma \in H^{2,2}(S^1,S^2) 
\where \dot\gamma(\theta) \neq 0\, \forall \theta \in S^1 
\text{ and }\gamma \text{ is embedded.}\}.
\end{align*}
In \cite{arXiv:0808.4038} an integer valued $S^{1}$-degree, $\chi_{S^{1}}(X_{c,g},M)$  
is introduced.
The $S^{1}$-degree
is defined, whenever $X_{c,g}$ is proper in $M$, i.e.
the set $\{\gamma \in M \where X_{c,g}(\gamma)=0 \}$ is compact,
and does not change under homotopies in the class
of proper vector fields.

\begin{theorem}
Suppose $(S^2,g)$ has positive Gauss curvature.
Then there is $\eps_0>0$ such that for all smooth functions
$c:S^2 \to \rz$ satisfying $0< c \le \eps_0$
there are two embedded geometrically distinct closed curves 
which solve equation \eqref{eq:prescribed_geodesic}.   
\end{theorem}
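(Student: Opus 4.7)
The plan is to apply the $S^{1}$-equivariant degree $\chi_{S^{1}}$ of \cite{arXiv:0808.4038} to the vector field $X_{c,g}$ on the open set $M \subset H^{2,2}(S^{1},S^{2})$, computing it by deforming $(c,g)$ to the round sphere with vanishing prescribed curvature. Fix $\eps_{0}>0$ as in Lemma \ref{l:proper} (uniform in $\|c\|_{\infty}$) and assume $0 < c \le \eps_{0}$. For every $(s,t) \in [0,1]^{2}$ one has $\|sc\|_{\infty} \le \eps_{0}$; writing $sc = (s\|c\|_{\infty})(c/\|c\|_{\infty})$ and applying Lemma \ref{l:proper} to the function $c/\|c\|_{\infty}$ with the rescaled scalar parameter ranging in $[0,\eps_{0}]$, one finds that the two-parameter homotopy $(s,t) \mapsto X_{sc,g_{t}}$ is proper on $M$ throughout the square. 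Homotopy invariance of the $S^{1}$-degree, obtained by deforming first $t$ from $1$ to $0$ with $s=1$, and then $s$ from $1$ to $0$ with $t=0$, then yields
\[
\chi_{S^{1}}(X_{c,g}, M) \;=\; \chi_{S^{1}}(X_{c,g_{can}}, M) \;=\; \chi_{S^{1}}(X_{0,g_{can}}, M).
\]

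The zeros of $X_{0,g_{can}}$ in $M$ are the constant-speed parameterizations of oriented great circles on the round sphere; modulo the free $S^{1}$-action by reparameterization this moduli space is diffeomorphic to $S^{2}$, and the corresponding $S^{1}$-degree computation in \cite{arXiv:0808.4038} gives $\chi_{S^{1}}(X_{0,g_{can}},M) = \chi(S^{2}) = 2$. Since $S^{1}$ acts freely on $M$ (no embedded immersed curve is fixed by a nontrivial reparameterization), each non-degenerate $S^{1}$-orbit of zeros of $X_{c,g}$ contributes $\pm 1$ to the degree, so a total of $2$ forces at least two geometrically distinct orbits of embedded solutions to \eqref{eq:prescribed_geodesic}.

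The main obstacle is this last passage from the numerical value $2$ of the degree to the existence of two geometrically distinct orbits: a priori, a single degenerate orbit could carry a local degree of $2$ and account for the whole count. I would handle this either by a local $S^{1}$-degree analysis near each orbit---using that the $S^{2}$-family of reference solutions on the round sphere splits Morse-theoretically into at least two orbits under a small perturbation, reflecting $\chi(S^{2})=2$---or by invoking the refined decomposition of $\chi_{S^{1}}$ into contributions of distinct orbits from \cite{arXiv:0808.4038}. In either case the outcome is two geometrically distinct embedded closed curves on $(S^{2},g)$ with constant geodesic curvature $c$, as required.
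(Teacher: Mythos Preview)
Your homotopy strategy---deform $g$ to $g_{can}$ along $g_t$ and invoke Lemma~\ref{l:proper} for properness---is exactly the paper's route, and the extra deformation in $s$ down to $c\equiv 0$ is harmless but unnecessary: the paper simply quotes $\chi_{S^1}(X_{c,g_{can}},M)=-2$ directly from \cite{arXiv:0808.4038} for any positive $c$.

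The genuine gap is the one you flag yourself, and your proposed fixes do not close it. First, the value of the degree in \cite{arXiv:0808.4038} is $-2$, not $+2$; this sign is not cosmetic. Second, the multiplicity conclusion does not come from a nondegeneracy assumption, nor from ``splitting the $S^2$-family under perturbation'' (we are not perturbing; we have a fixed $(c,g)$), nor from a vague ``refined decomposition''. The paper's mechanism is the one-sided estimate of \cite[Lem.~4.1]{arXiv:0808.4038}: the local $S^1$-degree of \emph{any} isolated zero orbit, degenerate or not, is at least $-1$. Hence a single orbit can account for at most $-1$, and since the total is $-2$ there must be at least two geometrically distinct orbits. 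Note that with your sign $+2$ this lower bound would give nothing (a single degenerate orbit of local degree $+2$ is not excluded by $\ge -1$), so both the correct sign and the correct lemma are essential to finish the argument.
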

\begin{proof}
From the uniformization theorem up to isometries we may assume
without loss of generality that
\begin{align*}
g= e^\phi g_{can}, 
\end{align*}
where $\phi \in C^\infty(S^2,\rz)$ and $g_{can}$ denotes the 
standard round metric on $S^2$.\\
We consider the set of metrics $\{g_t\where t\in [0,1]\}$ 
defined by
\begin{align*}
g_t &:= e^{t\phi} g_{can}.
\end{align*}
From Lemma \ref{l:proper} there is $\eps_0>0$
such that the set
\begin{align*}
\{\gamma \in M \where X_{c,g_{t}}(\gamma) = 0 \text{ for some }
t \in [0,1]\}  
\end{align*}
is compact for all functions $c$ with $0<c\le \eps_0$. Consequently, 
\begin{align*}
[0,1]\ni t\mapsto X_{c,g_t}  
\end{align*}
is is a homotopy of proper vector fields.
From \cite{arXiv:0808.4038} there holds
\begin{align*}
-2= \chi_{S^1}(X_{c,g_{can}},M),  
\end{align*}
such that the homotopy invariance leads to
\begin{align*}
\chi_{S^1}(X_{c,g},M)=-2  
\end{align*}
Since the local degree of an isolated zero orbit 
is greater than or equal to $-1$ by \cite[Lem 4.1]{arXiv:0808.4038}, there
are at least two geometrically distinct solutions to \eqref{eq:prescribed_geodesic}.
This gives the claim.  
\end{proof}

\bibliographystyle{abbrv}
\bibliography{simple_convex_curve}

\end{document}